\newtheorem{The}{Theorem}
\newtheorem{Exam}[The]{Example}
\newtheorem{Cox}[The]{Counterexample}
\theoremstyle{definition}
\numberwithin{equation}{section} \numberwithin{The}{section}
\begin{document}

\title{Comparisons of policies based on relevation and replacement by a new one unit in reliability}

\author{
F\'{e}lix Belzunce\footnote{Corresponding author}$^{,2}$, Carolina Martínez-Riquelme$^2$\\
José A. Mercader$^3$,  José M. Ruiz$^2$ \\ $^2$Dpto.
Estad\'{\i}stica e Investigaci\'{o}n Operativa \\ Universidad de Murcia
\\ Facultad de Matem\'{a}ticas, Campus de Espinardo \\
30100 Espinardo (Murcia), SPAIN\\ $^3$Dpto. Matemática Aplicada y Estadística \\ Universidad Politécnica de Cartagena \\ Paseo Alfonso XIII, 52\\ 30203 Cartagena, SPAIN\\ e-mail: \texttt{belzunce@um.es, carolina.martinez7@um.es} \\  \texttt{josea.mercader@upct.es, jmruizgo@um.es}
}
\date{}

\maketitle

\begin{abstract}  
The purpose of this paper is to study the role of the relevation transform, where a failed unit is replaced by a used unit with the same age as the failed one, as an alternative to the policy based on the replacement by a new one. In particular, we compare the stochastic processes arising from a policy based on the replacement of a failed unit by a new one and from the one in which the unit  is being continuously subjected to a relevation policy. The comparisons depend on the ageing properties of the units under repair. 

Keywords: Relevation, replacement, stochastic order, dynamic hazard rate order, IFR [DFR], NBU [NWU].

\end{abstract}

\section{Introduction}
The concept of relevation was introduced by Krakowsky (1973) and was further explored by Baxter (1982) and Shanthikumar and Baxter (1985). This operation, among two survival functions,  provides a survival function arising in the following situation. Let us consider two items with independent non negative random lifetimes $T$ and $S$, and survival functions $\overline F(t)=P[T>t]$ and $\overline G(t)=P[S>t]$, respectively. The item with random lifetime $T$ is replaced, upon failure, by an item with random lifetime $S$, but with the same age as the first unit. Then, the survival function of the next failure time is given by
\begin{equation}\label{eq-rel}
\overline F(t) + \overline G(t) \int_0^t \frac{dF(x)}{\overline G(x)},\text{ for } t\ge 0.
\end{equation}

This operation  is known as the relevation transform and will be denoted by $T \# S$. This operation is clearly non commutative and, denoting the residual lifetime of $S$ at time $t$ by
\[
S_t\equiv [S-t|S>t],\text{ for all }t, \text{ such that }\overline G(t)>0,
\]
it is satisfied that
\begin{equation}\label{eq1.2}
T \# S = T + S_T,
\end{equation}
assuming that $Supp(T) \subseteq Supp(S)$, $T$ and $S$ are independent and $T<S$ (a.s.). Recall that this operation is different from that of the residual lifetime at random time. In this case, if $T$ and $S$ are two independent non negative random variables, then the random variable $[T - S|T > S]$ is the residual lifetime of $T$ at random time $S$. The relevation can be also used to describe the following repair policy. Let us consider that a failed system is repaired by replacing or repairing one or more of its components, then  the  age of the repaired system is the same of the system prior to the failure, but the underlying distribution has changed given that some of its components are different. Therefore, the relevation can be considered not only as a replacement policy, but also as a repair policy, similar to the minimal repair policy where, under a failure, the system is restored to a working condition just previous to the failure, in the sense that the age of the system has not changed, but the system failure rate does. This may occur when some important components of the system are replaced or repaired.  Recall that the relevation has been used in other contexts, see Pellerey, Shaked and Zinn (2000) and Belzunce, Lillo, Ruiz and Shaked (2001) and has been considered more recently in the context of coherent systems, see Belzunce, Martínez-Riquelme and Ruiz (2019) and Navarro, Arriaza and Suárez-Llorens (2019).

The relevation policy is clearly an alternative to the one in which an item, upon failure, is replaced by a new one. Dealing with an item which is being continuously under a relevation policy, as an alternative to the replacement by a new one  policy, the following question arises: Is the number of replacements under a relevation policy greater or smaller than the number of replacements by a new one, by any time $t$? Alternatively, this question can be rephrased in terms of the number of failures or the failure times. 

The comparison of repair and/or replacement policies in terms of the number of failures or  failure times has been already considered in the literature. The earlier results on the topic can be found in Barlow and Proschan (1975), Langberg (1988),  Block, Langberg and Savits (1990) and (1993) and Baxter, Kijima and Tortorella (1996).  In these papers, the authors compare the number of failures and removals for any fixed interval time $[0,t]$ under different ageing properties of the underlying distributions. These comparisons are made in the usual stochastic order and they can be applied under  the NBU (NWU) ageing property of the corresponding units.  Kijima, Li and Shaked (2001) summarize a great number of these results and, additionally, Li (2005) provides ``several maintenance comparison results on the age-dependent block policy", as well as a  survey on previous results for some other repair policies. 

Additionally, Belzunce, Lillo, Ruiz and Shaked (2001) provide comparisons for two relevation counting processes, Belzunce, Ortega and Ruiz (2005) and (2006) give some results for the comparison of renewal, age and block replacement policies, and more recently, Badía, Sangüesa and Cha (2018a, 2018b) obtain new results for the comparison of trend renewal and generalized Polya processes, which include minimal repair processes and some other policies, as a particular case, and Mercier and Castro (2019) compare the number of failures and failure times for two imperfect repair models for a degradation system.

We want to mention that some other authors compare replacement policies in view of cost rates and optimization (see Nakagawa and Zhao,  2013, Zhao, Mizutani and Nakagawa, 2015 and Zhao, Quian and Nakagawa, 2017, and the references therein). This approach has not been considered in this paper. 

This work is in the spirit of the first group of the above mentioned results, and it is intended  to provide new results which compare the sequence of failures times of the policies previously described. In these results, we will provide conditions under which the sequence of failure times under one of the policies is bigger, in some probabilistic sense, than the sequence under the other policy. It is important to notice that the new results are given in a multivariate setting, whereas most of the previous results are given in the univariate setting, and the new ones are not only for the stochastic order but also for the multivariate hazard rate order. 

In order to give the new results, we will provide several properties of a relevation process in Section \ref{Sec2},  and we will define the main tools to compare failure times, which are the stochastic and hazard rate orders and the IFR [DFR] and NBU [NWU] ageing notions. Later, in Section \ref{Sec3}, we will provide several results on comparing sequences of failure times under the above mentioned policies. In Section \ref{Sec4} we discuss some applications of these results for minimal repair and generalized Yule birth processes. To finish, we make a section with some conclusions.

Along the paper, we will use the following notation. By ``increasing" and ``decreasing" we mean ``nondecreasing" and ``nonincreasing", respectively. For any random variable $X$ and any event $A$, we use $[X|A]$ to denote the random variable whose distribution is the conditional distribution of $X$ given $A$. In addition, given a random variable $X$ with continuous distribution function $F$ and density $f$, then, $\bar{F}\equiv 1-F$ denotes the survival function and $f/\overline{F}$ is the hazard rate function associated to $X$. Finally, we denote by $\overset{\text{d}}{=}$ the equality in law.

\section{Preliminaries on relevation, stochastic comparisons and ageing properties}\label{Sec2}
In this section, we describe first the structure of the relevation policy as a counting  process. To do this, we recall the definition of an elementary pure birth process. 

A counting process $\{N(t);t\ge 0\}$ is called an elementary (or non homogeneous) pure birth (EPB) process  if (Pfeifer, 1982b)

\begin{itemize}

\item[(a)] $\{N(t); t\ge 0\}$ possesses right continuous paths;

\item[(b)] the limits 
\[
\lambda_n(t)= \lim_{h \rightarrow 0^+}\frac{1}{h} P(N(t+h)=n+1|N(t)=n),\text{ for all }n\ge 1, t\ge 0
\]
are positive and finite; and 

\item[(c)] for the sequence of arrival times $T_n=\sup\{t\ge 0|N(t)=n\}$, $n\ge1$ all finite dimensional marginals are absolutely continuous with respect to Lebesgue measure. 
\end{itemize}

The sequence of arrival times has been shown to be identically distributed with the record sequence of independent random variables when the underlying random variables may change after each record value (see Pfeifer, 1982b). Some of its stochastic properties are the following (see Pfeifer, 1982a, 1982b, Kamps, 1995 and Torrado, Lillo and Wiper, 2012): 

\begin{itemize}

\item[(P1)] $\{T_n; n\ge1\}$ is a Markov chain with transition probabilities
\begin{equation}\label{mc-ebp}
P(T_n>t|T_{n-1}=s)=\frac{1-F_n(t)}{1-F_n(s)},\text{ for }0\le s \le t,
\end{equation}
where $F_n(x)=1-\exp{\int_0^x \lambda_n(u)du}$ is a distribution function for a non negative random variable, that we will denote by $X_n$, for all $n\ge1$. Therefore, $P(T_n>t|T_{n-1}=s)=P(X_n>t|X_n>s)$, for $0\le s \le t$. 

\item[(P2)] For $n\ge 1$, the joint density function of $(T_1,T_2,\ldots, T_n)$ is given by
\[
f(t_1, t_2, \ldots,t_n)=\prod _{i=1}^{n-1} \frac{f_i(x_i)}{1-F_i(x_i)}f_n(x_n)\text{ for }0< t_1 < t_2 < \cdots < t_n,
\]
where $f_i$ is the density function corresponding to $F_i$. Therefore an EPB process is completely defined by the sequence of distribution functions $\{F_n; n\ge 1\}$. 
\end{itemize}

Properties P1 and P2 characterize the EPB process. Also, they can be used to provide the marginal distributions of the arrival times. It can be seen that, for $n\ge 1$, the marginal survival function of $T_n$ is given recursively by
\begin{equation}\label{dist-t1}
\overline G_1(t) = \overline F_1(t),\text{ for }t\ge 0, 
\end{equation}
and
\begin{equation}\label{dist-tn}
\overline G_n(t) =  \overline G_{n-1}(t) + \overline F_{n}(t) \int_0^t \frac{d G_{n-1}(x)}{\overline F_n(t)},\text{ for }t\ge 0.
\end{equation}

Now, we observe the relationship of an EPB process with the sequence of failures according to a relevation policy. Let us consider a sequence of distribution functions $\{F_n; n\ge 1\}$. Let us assume that we consider initially an item with distribution function $F_1$, and let us denote by $T_1$ the corresponding failure time. Given $T_1=t_1$, we perform a relevation of the first item with another item, stochastically independent of the previous one, with the same age of the failed item and with a baseline distribution $F_2$, that is, given $T_1=t_1$ the survival function of the next failure, denoted by $T_2$,  is given by $(1-F_2(t))/(1-F_2(t_1))$ for $0\le t_1 \le t$. In general, given a sequence of failure times $T_1=t_1<\cdots<T_{i-1}=t_{i-1}$, we perform a relevation of the failed item with an item, stochastically independent of the previous ones,  with age $t_{i-1}$ and baseline distribution $F_i$, that is, the survival function of the next failure time $T_i$ is given by $(1-F_i(t))/(1-F_i(t_{i-1}))$ for $0\le t_{i-1} \le t$. From P1, we observe that the sequence of failure times, $\{T_n;n\ge 1\}$, correspond with the arrival times of an EPB process where the sequence $\{\lambda_n;n\ge1\}$ is given by $\lambda_n = f_n/\overline F_n$, for $n\ge 1$. This fact has been observed, under different points of view, by Cramer and Kamps (2003), Lenz (2008) and Torrado, Lillo and Wiper (2012). Therefore, P2 gives the joint distribution of the failure times $(T_1,\ldots,T_n)$ and equations \eqref{dist-t1} and \eqref{dist-tn} give the marginal distributions, where we can see the relevation transform of the corresponding distribution functions. 

As we will see in Section \ref{Sec4}, two important stochastic models in reliability can be described as a particular case of a proper EPB process. In particular, minimal repair policies and generalized Yule birth processes are particular cases of this counting process.

 As mentioned in the introduction, in contrast to the relevation policy, we can consider a policy, in which, upon failure, the item is replaced by a new one, stochastically independent of the previous ones. In particular, given a sequence of independent, but not necessarily identically distributed, continuous random variables $\{X_n;n\ge 1\}$, the arrival times of this policy are given by 
\[
T'_n \equiv X_1+\cdots + X_n,\text{ for all }n\ge 1.
\]
 
The corresponding counting process will be denoted by $\{N'(t); t\ge 0\}$, where $N'(t) = \sup\{n |T'_n \le t\}$, for $t\ge 0$. Given that the random variables $X_n$ are independent, the sequence of arrival times is a Markov chain with transition probabilities 
\begin{equation}\label{mc-ren}
P(T'_n>t|T'_{n-1}=s)=1-F_n(t-s),\text{ for }0\le s \le t,
\end{equation}
where $F_n$ is the distribution function of $X_n$, for $n\ge1$. Therefore, $P(T'_n>t|T'_{n-1}=s)=P(X_n+s>t)$, for $0\le s \le t$.

Therefore, as mentioned in the introduction, our main goal is to provide comparisons of the two previous counting processes.  There are several criteria to compare two random quantities, not only in reliability, but also in risk theory, finance and economy. In reliability, these comparisons are established in terms of some functions which are of interest to describe the ageing process of a unit or a system. For a detailed introduction and discussion see Müller and Stoyan (2002), Shaked and Shanthikumar (2007) and Belzunce, Martínez-Riquelme and Mulero (2016).

In particular, most results considered along this paper are given in terms of the usual stochastic order. Given two random variables $X$ and $Y$, with survival functions $\overline F$ and $\overline G$, we say that $X$ is smaller than $Y$ in the stochastic order, denoted by $X\le_{\textup{st}}Y$, if
\[
\overline F(t) \le \overline G(t),\text{ for all }t\in\mathbb R.
\]

Clearly, the stochastic order formalizes the idea that the random variable $X$ is less likely to take on large values than $Y$. For instance, when $X$ and $Y$ are considered as random lifetimes or random times at which some event occurs, $Y$ has larger lifetimes than $X$, in a probabilistic sense. An interesting characterization of the stochastic order is the following
\begin{equation*}\label{eq2.5}
X\le_{\textup{st}}Y\text{, if and only if, }E[\phi(X)]\le E[\phi(Y)],
\end{equation*}
for all increasing function $\phi$ such that the previous expectations exist.

The previous characterization has been used to provide a multivariate extension of the usual stochastic order. Given two random vectors $\mathbf X=(X_1,\ldots,X_n)$ and $\mathbf Y=(Y_1,\ldots, Y_n)$, we say that $\mathbf X$ is smaller than $\mathbf Y$ in the multivariate stochastic order, denoted by $\mathbf X \le_{\textup{st}} \mathbf Y$, if 
\[
E[\phi(\mathbf X)]\le E[\phi(\mathbf Y)],
\]
for all increasing function $\phi:\mathbb R^n \mapsto \mathbb R$ such that the previous expectations exist. As a direct consequence of this definition, we have that if $\mathbf X \le_{\textup{st}} \mathbf Y$, then the marginal components of the random vector are also ordered in the stochastic order, that is, $X_i \le_{\textup{st}} Y_i$, for $i=1,\ldots,n$.

An additional stochastic order will be used to state some of the results. We say that $X$ is smaller than $Y$ in the hazard rate order, denoted by $X\le_{\textup{hr}} Y$, if 
\[
\frac{\overline{G}(x)}{\overline{F}(x)} \text{ is increasing in }x\text{ over } \left\{x\in \mathbb R | \overline F(x)>0\right \}.
\]

The hazard rate order can be also characterized in terms of the hazard rate function. If the distribution functions are differentiable, denoting by $r$ and $s$ the hazard rates of $X$ and $Y$, respectively, then $X\le_{\textup{hr}}Y$ if, and only if, 
\[
r(t)\ge s(t)\text{, for all }t\text{ such that }\overline F(t), \overline G(t)>0.
\]

Based on the interpretation of the hazard rate function in reliability, it is clear that the hazard rate order plays a similar role in reliability as the stochastic order.

Next, we provide a dynamic multivariate version of this order (see Shaked and Shanthikumar, 2007 and Belzunce, Martínez-Riquelme and Mulero, 2016). 

Let us consider a random vector $\mathbf{X}=(X_1,\dots,X_n)$ where the $X_i$'s represent the lifetimes of $n$ units. Therefore, the components are assumed to be non negative. For $t\ge 0$, let $h_t$ denote the list of units which have failed and their failure
times, which is called a \textit{history}. Down to the last detail,
\[
h_t=\{\mathbf{X}_I=\mathbf{x}_I,\mathbf{X}_{\overline{I}}>t\mathbf{e}\},
\]
where $I=\{i_1,\dots,i_k\}\subseteq\{1,\dots,n\}$,
$\overline{I}=\{1,\dots,n\}\setminus I$,
$\mathbf{X}_I$ denotes the vector formed by the components of
$\mathbf{X}$ with index in $I$ and $0<x_{i_j}<t$, for all
$j=1,\dots,k$, and $\mathbf{e}$ denotes a vector of 1's, where the
dimension is determined from the context. In this case, the dimension of $\mathbf{e}$ is equal to $n-k$.

Given a history $h_t$ as above and $j\in \overline{I}$, the
\textit{multivariate dynamic hazard rate function} of $X_j$ given the history $h_t$, is defined by
\begin{equation}\label{mchr} \eta_{j}(t|h_t)=\lim_{\Delta  \rightarrow
0^+}\frac{1}{\Delta }P(t<X_j\le t+\Delta |h_t),\text{ for all }t\ge 0.
\end{equation}
Clearly, $\eta_{j}(t|h_t)$ is the intensity of failure of the component $j$, given the history $h_t$, or the failure rate of $X_j$ at time $t$ given $h_t$.

In case $X_1<\cdots<X_n$ (a.s.), given $t > 0$, it is clear that any history $h_t$ has the following form
\[
h_t=\{X_1=x_1, \ldots,X_k=x_k, X_{k+1}>t,\ldots,X_{n}>t\},
\] 
where $0 < x_1<\cdots<x_k<t$, for some $k\in\{0,1,\ldots,n\}$.

Given two non negative continuous random vectors $\mathbf{X}=(X_1,\dots,X_n)$ and $\mathbf{Y}=(Y_1,\dots,Y_n)$ with multivariate dynamic hazard rate functions $\eta_\cdot(\cdot|\cdot)$ and $\lambda_\cdot(\cdot|\cdot)$, respectively, we say that $\mathbf X$ is smaller than $\mathbf Y$ in the \textit{multivariate dynamic hazard rate order},
denoted by $\mathbf X \leq_{\textup{dyn-hr}}\mathbf Y$, if
\[
\eta_{k}(t|h_t)\ge\lambda_{k}(t|h'_t),\text{ for all }t\ge 0,
\]
for all
\begin{equation*}\label{history1}
h_t=\{\mathbf{X}_{I\cup J}=\mathbf{x}_{I\cup
J},\mathbf{X}_{\overline{I\cup J}}>t\mathbf{e}\}
\end{equation*}
 and
\begin{equation*}\label{history2}
h'_t=\{\mathbf{Y}_I=\mathbf{y}_I,\mathbf{Y}_{\overline{I}}>t\mathbf{e}\},
\end{equation*}
whenever $I\cap J=\emptyset$,
$\mathbf{0}\leq\mathbf{x}_I\le\mathbf{y}_I\le t\mathbf{e}$, $\boldsymbol{0}\leq\boldsymbol{x}_J\le t\mathbf{e}$, and for all $k\in\overline{I\cup J}$. In this case, we say that $h_t$ is \textit{more severe} than
$h'_t$.

It is well known that (see Shaked and Shanthikumar, 2007 and Belzunce, Martínez-Riquelme and Mulero, 2016)
\[
\mathbf X \leq_{\textup{dyn-hr}}\mathbf Y \Rightarrow \mathbf X \leq_{\textup{st}}\mathbf Y.
\]

To finish, we provide the definition of two ageing properties that will be used along the next section (see Barlow and Proschan, 1975 and Lai and Xie, 2006).

A random variable $X$ with hazard rate $r$ is said to be IFR [DFR] (increasing [decreasing] failure rate), if $r$ is an increasing [decreasing] function. In addition, $X$ is said to be NBU [NWU] (new better [worst] than used), if
\[
X\ge_{\textup{st}}[\le_{\textup{st}}]\{X-t|X>t\}\text{ for all }t\text{ such that }\overline F(t)>0.
\]

It is well known that 
\[
X\text{ is IFR [DFR]}\Rightarrow X\text{ is NBU [NWU]}.
\]

\section{Comparison of policies based on relevation and replacement by a new one unit}\label{Sec3}

According to the introduction, the relevation policy is a clear alternative to the replacement of a failed item with a new one. Under the notation of previous section, we have two counting processes, $\{N(t);t\ge 0\}$ and $\{N'(t);t\ge 0\}$, respectively, defined by the sequence of these failure times. In this section, we consider the comparison of these two counting processes. As a starting point, we formulate the following question: Is the number of replacements under a relevation policy greater or smaller, than the one under a replacement by a new one unit?  Formally, the previous question can be stated in the following terms:  Can we assure that $N'(t)\le_{st}(\ge_{st}) N(t)$, for all $t>0$?

It is well known that, for a general counting process $\{N(t);t\ge 0\}$ with arrival times $T_n = \sup\{t \ge 0 | N(t)=n\}$, the following equivalence holds
\[
\{N(t)<n\} \Leftrightarrow \{T_n> t\}.
\]

Therefore, the comparison of the number of replacements for any fixed interval of time is equivalent to the comparison of arrival times, that is, 
\[N'(t)\le_{st}(\ge_{st}) N(t),\text{ for all }t\ge0 \Leftrightarrow T_n \ge_{st}(\le_{st})T'_n,\text{ for all }n\ge 1. 
\]

Every kind of result in this direction would help an engineer to select a policy with the smallest number of replacements and, equivalently, the greatest failure times. In general, the result does not hold as we show next.

It is not difficult to prove that, if we consider two independent random variables $T_1$ and $T_2$ with common distribution $F$, then $T_1 \# T_2 \le_{\textup st} (\ge_{\textup st}) T_1 + T_2$, if, and only if, 
\[
\int_0^t \left( \overline F(t-x) - \frac{\overline F(t)}{\overline F(x)} \right) dF(x) \ge (\le) 0,\text{ for all }t\ge 0.
\]
Clearly, this integral is positive (negative) if $F$ is NBU (NWU). This is the case of a gamma distribution, that is, with density function given by 
\[
f(x)=\frac{x^{a -1}\exp\left\{-\frac{x}{b}\right\}}{ b^{a} \Gamma(a)},\text{ for all }0<x<+\infty,
\]
where $a,b>0$. Since the gamma distribution is NBU (NWU) if the shape parameter $a$ is greater (smaller) than 1, given that a used unit is stochastically smaller (greater) than a new one, a relevation should lead to an earlier (later) time for the second failure than the replacement by a new one. The NBU (NWU) condition appears in many other results as a sufficient condition for the comparison of the number of failures and failure times among renewal processes and some other repair policies like age and block policies (see Kijima, Li and Shaked, 2001 and Li, 2005). 

Therefore, there are situations where  either $T'_n \ge_{st}T_n$ or $T'_n\le_{st}T_n$. 

It is worth to notice that the stochastic order does not hold when we remove the NBU or NWU ageing conditions. Let us consider the following counterexample. 

\begin{Cox} Let $T_1$ and $T_2$ be two independent random variables, with common distribution function 
\[
F(t)=1-\exp(-t^{0.2}e^{1.1t}),\text{ for all }t\ge0.
\]

It is known (see Lai and Xie, 2006, p. 79) that the hazard rate is non monotone. Actually, $F$ is not either NBU or NWU. In Figure \ref{cox} we can observe that the survival functions corresponding to $T_1 \# T_2$ and $T_1 + T_2$ cross each other, and the stochastic order does not hold, in any sense, for the two random variables. 

\begin{figure}[hbt]\centering
\includegraphics[scale=0.55]{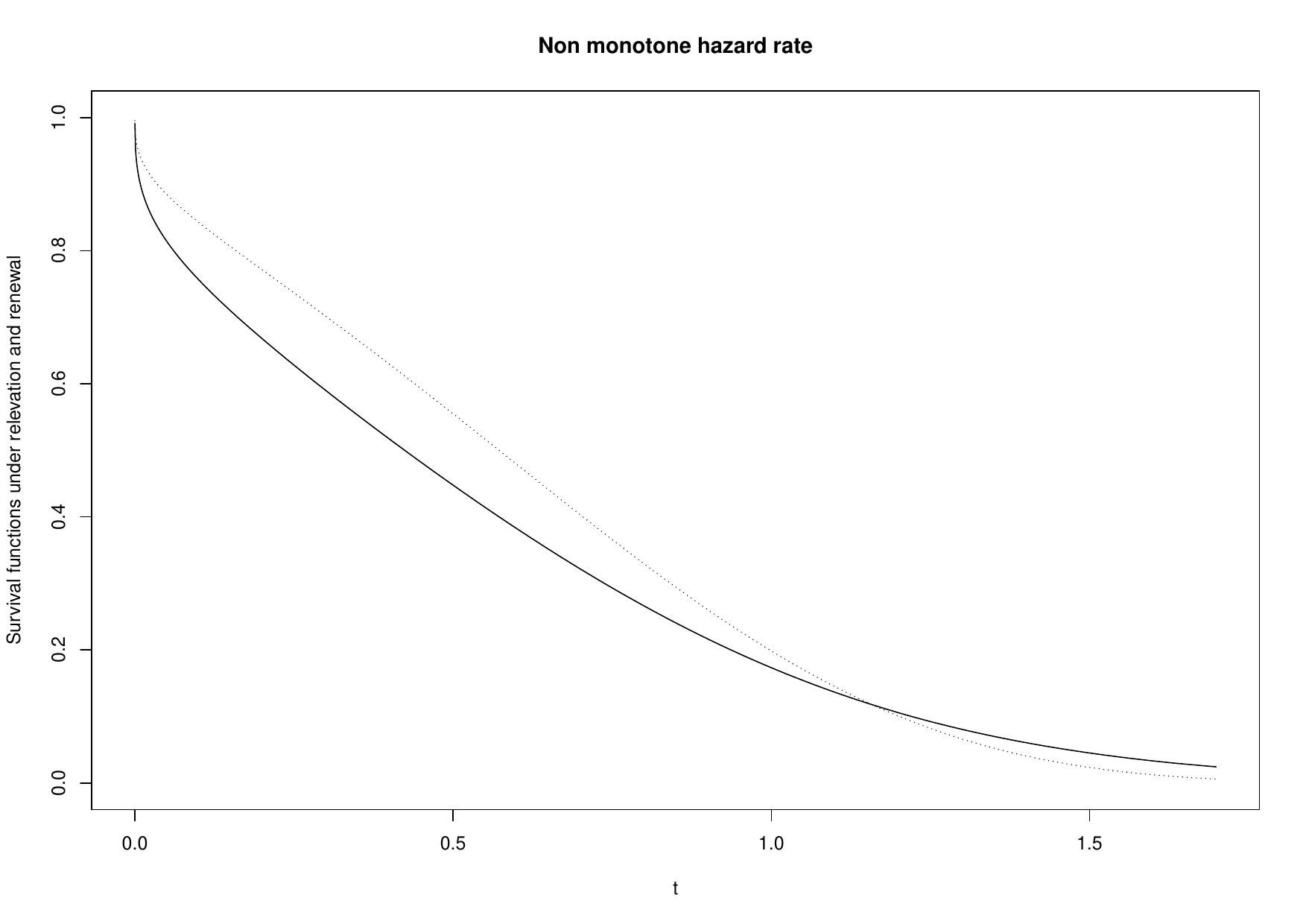}
\caption{\label{cox}\textit{Plot of the survival functions of $T_1 \# T_2$ (dashed line) and $T_1 + T_2$ (continuous line).}}
\end{figure}
\end{Cox}

Therefore, it is natural to consider  the NBU (NWU) property as a sufficient condition for the comparison of the two previous policies. 

In the next theorem we prove this result in a more general setting. In particular, we compare the arrival times of the stochastic processes $N$ and $N'$, in terms of the stochastic order. In addition, we get univariate comparisons for the arrival times and the number of replacements as a consequence of these multivariate comparisons. All the random variables considered along the following sections are considered to be non negative.

Along the proof, the following definition of conditionally increasing in sequence property is required. Given a random vector $\mathbf{X}=(X_1,\dots,X_n)$, we say that $\mathbf{X}$ is \textit{conditionally increasing in sequence}, denoted by CIS, if the distribution function of the random variable
\begin{equation*}
[X_i| X_1=x_1,\dots,X_{i-1}=x_{i-1}]
\end{equation*}
is decreasing in $(x_1,\dots,x_{i-1})$ in the support of $(X_1,\ldots,X_{i-1})$, for all  $i=2,\dots,n$.

\begin{The}\label{Th3.2} Let us consider the processes,  $N$ and $N'$, based on a sequence of distributions functions $\{F_n;n\ge 1\}$, corresponding to a sequence of independent continuous random variables, $\{X_n; n\ge 1\}$, with arrival times $\{T_n;n\ge 1\}$ and $\{T'_n; n\ge 1\}$, respectively. If $X_i$ is  NBU \textup{[}NWU\textup{]}, for all $i\ge 1$, then, 
\[
(T'_1,T'_2, \ldots,T'_n)\ge_{\textup{st}}[\le_{\textup{st}}](T_1,T_2, \ldots,T_n)\text{ for all }n\ge 1,
\]
and 
\[
N'(t) \le_{\textup{st}} [\ge_{\textup{st}}] N(t)\text{, for all }t\ge 0.
\]
\end{The}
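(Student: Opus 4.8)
The plan is to exploit the fact, recorded in property P1 and equation \eqref{mc-ren}, that both $\{T_n;n\ge1\}$ and $\{T'_n;n\ge1\}$ are Markov chains, and to deduce the multivariate stochastic order from a one-step comparison by means of the standard building-up theorem for the multivariate stochastic order (Shaked and Shanthikumar, 2007, Theorem 6.B.3). That theorem requires three ingredients: an initial comparison $T_1\le_{\textup{st}}T'_1$, a one-step transition comparison, and stochastic monotonicity (the CIS property) of one of the two chains. I treat the NBU case throughout; the NWU statement follows verbatim after reversing every inequality. The two transition laws are, for $0\le s\le t$,
\[
P(T_n>t\mid T_{n-1}=s)=\frac{\overline F_n(t)}{\overline F_n(s)}\quad\text{and}\quad P(T'_n>t\mid T'_{n-1}=s)=\overline F_n(t-s),
\]
both supported on $[s,\infty)$, and since $T_1$ and $T'_1$ share the survival function $\overline F_1$ we have $T_1\overset{\text{d}}{=}T'_1$, so the initial comparison is trivial.

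The decisive step is the one-step transition comparison, and this is exactly where the ageing hypothesis enters. Writing the NBU property of $X_n$ as $\overline F_n(x)\,\overline F_n(t)\ge\overline F_n(t+x)$ for all $x,t\ge0$ and substituting $t=s$, $x=u-s$, I obtain $\overline F_n(u-s)\ge\overline F_n(u)/\overline F_n(s)$, that is,
\[
[T_n\mid T_{n-1}=s]\le_{\textup{st}}[T'_n\mid T'_{n-1}=s]\quad\text{for every }s.
\]
Thus, starting from a common value of the preceding arrival, the replacement policy produces a stochastically later next failure than the relevation policy, which formalises the idea that a new unit dominates a used one of positive age under NBU.

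Next I would check that the replacement chain is CIS, i.e.\ stochastically monotone in its previous state: for $s\le s'$ one has $\overline F_n(t-s')\ge\overline F_n(t-s)$ whenever $t\ge s'$ (and the survival equals $1$ for $t<s'$), so a larger previous arrival yields a stochastically larger next arrival. Chaining this with the transition comparison gives, for all $s\le s'$,
\[
[T_n\mid T_{n-1}=s]\le_{\textup{st}}[T'_n\mid T'_{n-1}=s]\le_{\textup{st}}[T'_n\mid T'_{n-1}=s'],
\]
and since the Markov property collapses the conditioning on $(T_1,\dots,T_{i-1})$ to conditioning on $T_{i-1}$ alone, this is precisely the ordered-conditional hypothesis of Theorem 6.B.3. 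Applying that theorem yields $(T_1,\dots,T_n)\le_{\textup{st}}(T'_1,\dots,T'_n)$ for every $n$, which is the asserted multivariate comparison.

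The univariate conclusions then follow at once. The multivariate stochastic order passes to marginals, giving $T'_n\ge_{\textup{st}}T_n$ for each $n$; combining this with the equivalence $\{N(t)<n\}\Leftrightarrow\{T_n>t\}$ produces $P(N'(t)\ge n)=P(T'_n\le t)\le P(T_n\le t)=P(N(t)\ge n)$ for all $n$ and $t$, hence $N'(t)\le_{\textup{st}}N(t)$. I expect the only real obstacle to be the careful verification of the monotonicity hypothesis across the regime change at $t=s$, where each transition survival function drops from the value $1$; no new idea is needed beyond this bookkeeping, because the NBU property supplies the crucial transition comparison directly.
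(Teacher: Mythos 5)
Your proposal is correct and follows essentially the same route as the paper: the building-up theorem for the multivariate stochastic order (Theorem 6.B.3/6.B.4 in Shaked and Shanthikumar, 2007), the Markovian reduction of the conditional laws via \eqref{mc-ebp} and \eqref{mc-ren}, and the observation that the one-step comparison $[T_n\mid T_{n-1}=s]\le_{\textup{st}}[T'_n\mid T'_{n-1}=s]$ is exactly the NBU property of $X_n$. The only (harmless) variation is that you verify the stochastic-monotonicity/CIS hypothesis directly for the replacement chain $(T'_1,\ldots,T'_n)$, whereas the paper invokes the CIS property of the relevation chain $(T_1,\ldots,T_n)$ from Belzunce, Lillo, Ruiz and Shaked (2001); the theorem's hypothesis only requires one of the two, so both choices work.
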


\begin{proof} Let us assume that $X_i$ is  NBU, for all $i\ge 1$, the NWU case follows similarly by reversing the inequalities. According to Theorem 6.B.4 in Shaked and Shanthikumar (2007) (see also Theorem 3.2.4 in Belzunce, Martínez-Riquelme and Mulero, 2016), to get the results it is sufficient to prove,  that, for any $n>1$, 
\begin{equation}\label{eq3.1}
T_1 \le_{st} T'_1,
\end{equation}
\begin{equation}\label{eq3.2}
[T_i|T_1=t_1,\ldots,T_{i-1}=t_{i-1}]\le_{\textup{st}}[T'_i|T'_1=t_1,\ldots,T'_{i-1}=t_{i-1}]
\end{equation}
for all $0<t_1<\cdots<t_{i-1}$ and for all $i=2,\ldots,n$, and
\begin{equation}\label{eq3.3}
(T_1,\ldots,T_n)\text{ or } (T'_1,\ldots,T'_n),\text{ or both, are CIS}.
\end{equation}

Condition \eqref{eq3.1} is trivial given that $T_1 \overset{\text{d}}{=} T'_1 \overset{\text{d}}{=} X_1$. According to Belzunce, Lillo, Ruiz and Shaked (2001), $(T_1,\ldots,T_n)$ satisfies \eqref{eq3.3}.  Therefore, it just remains to prove \eqref{eq3.2}. 

From the Markovian property of these two processes, and equations \eqref{mc-ebp} and \eqref{mc-ren}, we have that  
\[
[T_i|T_1=t_1,\ldots,T_{i-1}=t_{i-1}]\overset{\text{d}}{=}[X_i|X_i>t_{i-1}]
\]
and 
\[
[T'_i|T'_1=t_1,\ldots,T'_{i-1}=t_{i-1}]\overset{\text{d}}{=} X_i+t_{i-1},
\]
consequently, \eqref{eq3.2} is equivalent to 
\[
[X_i-t_{i-1}|X_i>t_{i-1}]\le_{st} X_i,\text{ for all }t_{i-1}>0,
\]
which is equivalent to $X_i$ being NBU. 

To finish, we observe that, from this result, we get that $T'_n \ge_{\textup{st}} T_n$, for all $n\ge 1$, and therefore $N'(t) \le_{\textup{st}}  N(t)$, for all $t\ge 0$.
\end{proof} 

Therefore, the relevation policy gives rise to more failures than the replacement by a new one by any time $t$,  although it has a lower cost than the replacement by a new one policy.

It is natural to consider some other cases under different ageing properties of the unit under replacement. We have considered this problem when the NBU [NWU] property is replaced by the IFR [DFR] property. Next, we provide a result for the comparison of arrival times of the previous counting processes in the dynamic hazard rate order, when the underlying sequences of random variables are IFR [DFR].

\begin{The}\label{Th3.3}
According to the notation of Theorem \ref{Th3.2}, if $X_i$ has an absolutely continuous IFR [DFR] distribution, for all $i\ge 1$, then
\[
(T'_1,T'_2, \ldots,T'_n)\ge_{\textup{dyn-hr}}[\le_{\textup{dyn-hr}}](T_1,T_2, \ldots,T_n),\text{ for all }n\ge 1.
\]
\end{The}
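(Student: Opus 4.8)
The plan is to verify directly the defining inequality of the $\leq_{\textup{dyn-hr}}$ order for $\mathbf{T}=(T_1,\ldots,T_n)$ and $\mathbf{T}'=(T'_1,\ldots,T'_n)$, taking advantage of the fact that both vectors have strictly increasing components almost surely ($T_1<\cdots<T_n$ for the relevation process, and $T'_1<\cdots<T'_n$ since the $X_i$ are positive). Because of this ordering, in the notation of the definition of $\leq_{\textup{dyn-hr}}$ every admissible index set of observed failures is an initial segment, so $I=\{1,\ldots,k\}$ and $I\cup J=\{1,\ldots,m\}$ with $m\ge k$. I treat first the IFR case, where the claim is $\mathbf{T}\leq_{\textup{dyn-hr}}\mathbf{T}'$, so that $\mathbf{T}$ is the smaller vector and carries the more severe history
\[
h_t=\{T_1=x_1,\ldots,T_m=x_m,T_{m+1}>t,\ldots,T_n>t\},\qquad 0<x_1<\cdots<x_m<t,
\]
while $\mathbf{T}'$ carries the less severe one
\[
h'_t=\{T'_1=y_1,\ldots,T'_k=y_k,T'_{k+1}>t,\ldots,T'_n>t\},\qquad 0<y_1<\cdots<y_k<t,
\]
with $x_i\le y_i$ for $i\le k$. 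It then suffices to check $\eta_\ell(t\mid h_t)\ge\lambda_\ell(t\mid h'_t)$ for every $\ell\in\overline{I\cup J}=\{m+1,\ldots,n\}$ and every $t\ge0$, where $\eta$ and $\lambda$ denote the dynamic hazard rates of $\mathbf{T}$ and $\mathbf{T}'$, respectively.

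Next I would compute these rates from the two transition laws. Since the components are ordered, given $h_t$ the only component of $\mathbf{T}$ that can fail in $(t,t+\Delta]$ is $T_{m+1}$, so $\eta_\ell(t\mid h_t)=0$ for $\ell>m+1$; and by the Markov property \eqref{mc-ebp}, $[T_{m+1}\mid h_t]\overset{\text{d}}{=}[X_{m+1}\mid X_{m+1}>x_m]$, whose hazard rate at $t>x_m$ equals $r_{m+1}(t):=f_{m+1}(t)/\overline{F}_{m+1}(t)$, independently of the observed times. Hence $\eta_{m+1}(t\mid h_t)=r_{m+1}(t)$. In the same way, given $h'_t$ the only component of $\mathbf{T}'$ that can fail next is $T'_{k+1}$, and by \eqref{mc-ren}, $[T'_{k+1}\mid h'_t]\overset{\text{d}}{=}X_{k+1}+y_k$, with hazard rate at $t>y_k$ equal to $r_{k+1}(t-y_k)$; thus $\lambda_{k+1}(t\mid h'_t)=r_{k+1}(t-y_k)$ and $\lambda_\ell(t\mid h'_t)=0$ for $\ell>k+1$.

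It then remains to compare, distinguishing two cases. If $m>k$, every $\ell\in\{m+1,\ldots,n\}$ has $\ell>k+1$, so $\lambda_\ell(t\mid h'_t)=0\le\eta_\ell(t\mid h_t)$ and the inequality is immediate. If $m=k$, the only possibly nontrivial index is $\ell=k+1$, for which the inequality reads $r_{k+1}(t)\ge r_{k+1}(t-y_k)$; since $0<y_k<t$ and $X_{k+1}$ is (absolutely continuous) IFR, its hazard rate $r_{k+1}$ is increasing and this holds, while the remaining indices give $0\ge0$. This proves the IFR statement. The DFR case is entirely analogous, but with the roles of the two processes interchanged: now $\mathbf{T}'$ is the smaller vector and carries the more severe history $\{T'_1=x_1,\ldots,T'_m=x_m,\ldots\}$, the nontrivial comparison becomes $r_{k+1}(t-x_k)\ge r_{k+1}(t)$, and it follows because the IFR monotonicity is replaced by $r_{k+1}$ being decreasing.

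The step I expect to demand the most care is the bookkeeping of histories, on two counts. First, one must notice that in the DFR case it is the replacement process, not the relevation process, that supplies the more severe history, so ``reversing the inequalities'' is not a purely formal flip. Second, one must handle the subcase in which the more severe vector has recorded strictly more failures than the other ($J\ne\emptyset$, i.e.\ $m>k$), where the comparison over $\overline{I\cup J}$ is vacuously satisfied because the less severe process cannot jump directly to its $(m+1)$-th failure. The conceptual heart of the argument is the contrast between \eqref{mc-ebp} and \eqref{mc-ren}: relevation preserves the common age, so the next failure rate is read at the \emph{current} time $t$, whereas replacement by a new unit resets the clock, giving a rate read at the \emph{shifted} time $t-y_k$; the IFR [DFR] hypothesis is precisely the monotonicity needed to order $r_{k+1}(t)$ and $r_{k+1}(t-y_k)$.
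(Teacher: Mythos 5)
Your proposal is correct and follows essentially the same route as the paper: both exploit the a.s.\ ordering of the arrival times to reduce every admissible history to an initial segment, compute the dynamic hazard rates from the two Markov transition laws \eqref{mc-ebp} and \eqref{mc-ren} as $r_{m+1}(t)$ versus $r_{k+1}(t-y_k)$, and invoke the IFR [DFR] monotonicity to compare them. Your explicit split into the cases $m>k$ (vacuous) and $m=k$ (the substantive comparison), and your remark that in the DFR case the replacement process carries the more severe history, merely make precise what the paper compresses into ``it is easy to see'' and ``follows similarly by reversing the inequalities.''
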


\begin{proof} Let us assume that $X_i$ is  IFR, for all $i\ge 1$, the DFR case follows similarly by reversing the inequalities. 

Let us consider a fixed $t>0$ and $n>1$ (the case $n=1$ is trivial). Let us denote the multivariate dynamic hazard rates of  $(T'_1,T'_2, \ldots,T'_n)$ and $(T_1,T_2, \ldots,T_n)$ by $\lambda_\cdot(\cdot|\cdot)$ and $\eta_\cdot(\cdot|\cdot)$, respectively. 

Given that 
\[
T'_1 < T'_2<  \ldots < T'_n, \text{ (a.s.)}\]
and 
\[
T_1 < T_2 < \ldots <T_n, \text{ (a.s.)},
\]
then, as we have observed in the previous section, the histories $h_t$ and $h'_t$ have the following form
\[
h_t=\{T_1=t_1,\ldots,T_j=t_j, T_{j+1}>t, \ldots, T_n>t\}
\]
and 
\[
h'_t=\{T'_1=t'_1,\ldots,T'_i=t'_i, T'_{i+1}>t, \ldots, T'_n>t\},
\]
where $i\le j$, $0<t_1<\cdots<t_{j}$, $0<t'_1<\cdots<t'_{i}$ and $t_k \le t'_k < t$ for all $k=1,\ldots,i$. Recall that $h_t$ has to be more severe than $h'_t$.

Now, denoting by $r_i$ the hazard rate function associated to $X_i$, we have that
\[
\lambda_k(t|h'_t)=\left\{ \begin{array}{c l}
r_k(t - t'_i) & \text{if }k=i+1\\
0 & \text{if }k>i+1
\end{array}\right.
\]
and
\[
\eta_k(t|h_t)=\left\{ \begin{array}{c l}
r_k(t) & \text{if }k=j+1 \\
0 & \text{if }k>j+1
\end{array}\right. .
\]
 
Therefore, if $X_i$ is IFR, it is easy to see that 
\[
\eta_{k}(t|h_t)\ge\lambda_{k}(t|h'_t),\text{ for all }k=j+1,\ldots,n.
\]
\end{proof}

In the following results, we consider the same problem as in the previous theorems but considering that the two policies are based on different sequences of distribution functions.

\begin{The}\label{Th3.4}
Let us consider the counting process $\{N'(t); t\ge 0 \}$ corresponding to the replacement by a new one policy,  defined from a sequence of independent continuous random variables $\{X_n; n\ge 1\}$ and the EPB process $\{N(t); t\ge 0 \}$ based on a sequence of distribution functions $\{F_n; n\ge 1\}$ corresponding to a sequence of independent continuous random variables $\{Y_n; n\ge 1\}$. Let us denote by $\{T'_n; n\ge 1\}$ and $\{T_n; n\ge 1\}$ the corresponding arrival times. If 
\begin{itemize} 
\item[a)] $X_1\ge_{\textup{st}}[\le_{\textup{st}}]Y_1$  and

\item[b)]  $X_n \ge_{\textup{st}}[\le_{\textup{st}}]\{Y_n-t|Y_n>t\}$ for all $n=2,\ldots$ and $t>0$ 
\end{itemize}
then, 
\[
(T'_1,T'_2, \ldots,T'_n)\ge_{\textup{st}}[\le_{\textup{st}}](T_1,T_2, \ldots,T_n)\text{ for all }n\ge 1,
\]
and 
$N'(t) \le_{\textup{st}}[\ge_{\textup{st}}]N(t)$, for all $t\ge 0$.
\end{The}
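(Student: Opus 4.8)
The plan is to follow the same strategy as in the proof of Theorem \ref{Th3.2}, invoking Theorem 6.B.4 in Shaked and Shanthikumar (2007), which reduces the multivariate comparison $(T_1,\dots,T_n)\le_{\textup{st}}(T'_1,\dots,T'_n)$ to three conditions: (i) $T_1\le_{\textup{st}}T'_1$; (ii) the conditional stochastic ordering of the $i$-th arrival time given the values of the earlier ones, for all $i=2,\dots,n$; and (iii) the CIS property of at least one of the two vectors. I would treat the [NWU]/$\le_{\textup{st}}$ case by reversing inequalities, so I focus on the first alternative throughout. Condition (i) is immediate, since $T_1\overset{\text{d}}{=}Y_1$ and $T'_1\overset{\text{d}}{=}X_1$, so that (i) is exactly hypothesis a). For condition (iii), the vector $(T_1,\dots,T_n)$ consists of the arrival times of the EPB (relevation) process associated with $\{F_n\}$; hence, exactly as in the proof of Theorem \ref{Th3.2}, the CIS property follows from Belzunce, Lillo, Ruiz and Shaked (2001).

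The substantive step is condition (ii). Using the Markovian structure of the two processes together with \eqref{mc-ebp} and \eqref{mc-ren}, I would write
\[
[T_i|T_1=t_1,\dots,T_{i-1}=t_{i-1}]\overset{\text{d}}{=}[Y_i|Y_i>t_{i-1}]
\]
and
\[
[T'_i|T'_1=t_1,\dots,T'_{i-1}=t_{i-1}]\overset{\text{d}}{=}X_i+t_{i-1}.
\]
Thus (ii) is equivalent to $[Y_i|Y_i>t_{i-1}]\le_{\textup{st}}X_i+t_{i-1}$, i.e.\ to $[Y_i-t_{i-1}|Y_i>t_{i-1}]\le_{\textup{st}}X_i$, which is precisely hypothesis b) read with $n=i$ and $t=t_{i-1}$.

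Having verified (i)--(iii), Theorem 6.B.4 yields the multivariate stochastic order $(T_1,\dots,T_n)\le_{\textup{st}}(T'_1,\dots,T'_n)$. Extracting the $n$-th marginals gives $T'_n\ge_{\textup{st}}T_n$ for all $n\ge1$, and the equivalence $\{N(t)<n\}\Leftrightarrow\{T_n>t\}$ recalled before the theorems translates this into $N'(t)\le_{\textup{st}}N(t)$ for all $t\ge0$. I expect no serious obstacle, as the argument is structurally identical to that of Theorem \ref{Th3.2}; the only real content is recognizing that hypothesis b) is exactly the residual-lifetime comparison needed in step (ii), which generalizes the NBU self-comparison used there (the case $X_i=Y_i$, in which b) reduces to $[Y_i-t|Y_i>t]\le_{\textup{st}}Y_i$).
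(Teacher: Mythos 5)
Your proposal is correct and follows essentially the same route as the paper: the authors likewise reduce Theorem \ref{Th3.4} to the three conditions of Theorem 6.B.4 in Shaked and Shanthikumar (2007), with condition (i) given by hypothesis a), condition (ii) reducing via the Markov transition formulas \eqref{mc-ebp} and \eqref{mc-ren} to hypothesis b), and the CIS property of $(T_1,\ldots,T_n)$ taken from Belzunce, Lillo, Ruiz and Shaked (2001). Your write-up simply fills in the details that the paper compresses into the phrase ``the proof follows under similar arguments to the one of Theorem \ref{Th3.2}.''
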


\begin{proof} The proof follows under similar arguments to the one  of Theorem \ref{Th3.2}, but in this case we have
\[
T'_1\overset{\text{d}}{=} X_1\text{ and } T_1\overset{\text{d}}{=} Y_1,
\]
and for all $1<i \le n$
\[
[T_i|T_1=t_1<\cdots<T_{i-1}=t_{i-1}]\overset{\text{d}}{=} [Y_{i}|Y_{i}>t_{i-1}]
\]
and 
\[
[T'_i|T'_1=t_1<\cdots<T'_{i-1}=t_{i-1}]\overset{\text{d}}{=} X_i+t_{i-1}.
\]
for all $0<t_1<\cdots<t_{i-1}$. 
\end{proof}

To finish, we provide an extension of Theorem \ref{Th3.3}.

\begin{The}\label{Th3.5} According to the notation of Theorem \ref{Th3.4}, if for all $i \ge 1$, $X_i$ and $Y_i$ have absolutely continuous distributions such that 
\begin{itemize}
\item[a)] $X_1\ge_{\textup{hr}}[\le_{\textup{hr}}]Y_1$  
and 

\item[b)] $X_n\ge_{\textup{hr}} [\le_{\textup{hr}}]\{Y_n-t|Y_n>t\}$, for all $n=2,\ldots$ and $t>0$, 
\end{itemize}
then, 
\[
(T'_1,T'_2, \ldots,T'_n)\ge_{\textup{dyn-hr}}[\le_{\textup{dyn-hr}}](T_1,T_2, \ldots,T_n)\text{, for all }n\ge 1.
\]
\end{The}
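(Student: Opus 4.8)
The plan is to mirror the proof of Theorem \ref{Th3.3}, replacing the single-sequence IFR hypothesis by the two hazard-rate conditions (a) and (b). I would fix $t>0$ and $n>1$ (the case $n=1$ reduces at once to condition (a)) and write the two histories in their canonical ordered form
\[
\begin{aligned}
h_t&=\{T_1=t_1,\ldots,T_j=t_j,T_{j+1}>t,\ldots,T_n>t\},\\
h'_t&=\{T'_1=t'_1,\ldots,T'_i=t'_i,T'_{i+1}>t,\ldots,T'_n>t\},
\end{aligned}
\]
with $i\le j$, $0<t_1<\cdots<t_j$, $0<t'_1<\cdots<t'_i<t$ and $t_k\le t'_k$ for $k=1,\ldots,i$, exactly as forced by the requirement that $h_t$ be more severe than $h'_t$. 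As in Theorem \ref{Th3.3}, $\eta_\bullet$ denotes the dynamic hazard rate of the relevation process $(T_1,\ldots,T_n)$ and $\lambda_\bullet$ that of the replacement process $(T'_1,\ldots,T'_n)$.

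The first key step is to pin down the two dynamic hazard rates, and here the asymmetry of the two kernels is decisive. Writing $r_m^X$ and $r_m^Y$ for the hazard rates of $X_m$ and $Y_m$, I would argue from \eqref{mc-ren} that, given $h'_t$, the next replacement satisfies $[T'_{i+1}\mid h'_t]\overset{\text{d}}{=}t'_i+X_{i+1}$, so the clock is shifted and $\lambda_{i+1}(t\mid h'_t)=r_{i+1}^X(t-t'_i)$, with $\lambda_k(t\mid h'_t)=0$ for $k>i+1$; whereas from \eqref{mc-ebp} the next relevation has the truncated law $[Y_{j+1}\mid Y_{j+1}>t_j]$, whose hazard rate at $t$ coincides with that of $Y_{j+1}$, giving $\eta_{j+1}(t\mid h_t)=r_{j+1}^Y(t)$ and $\eta_k(t\mid h_t)=0$ for $k>j+1$.

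The second key step is to read hypothesis (b) as a pointwise inequality: since the residual $\{Y_n-s\mid Y_n>s\}$ has hazard rate $u\mapsto r_n^Y(s+u)$, the relation $X_n\ge_{\textup{hr}}\{Y_n-s\mid Y_n>s\}$ is precisely $r_n^X(u)\le r_n^Y(s+u)$ for all $u\ge 0$ and $s>0$. With $\mathbf{X}=(T_1,\ldots,T_n)$ and $\mathbf{Y}=(T'_1,\ldots,T'_n)$ in the definition, it remains to check $\eta_k(t\mid h_t)\ge\lambda_k(t\mid h'_t)$ for every $k\ge j+1$. I would then split into cases. For $k>j+1$ both rates vanish (using $j\ge i$), and for $k=j+1>i+1$ the right-hand rate vanishes while $\eta_{j+1}\ge 0$; the only substantive case is $k=j+1=i+1$, i.e. $j=i$, where the required $r_{j+1}^Y(t)\ge r_{j+1}^X(t-t'_j)$ follows from the displayed inequality with $n=j+1$, $s=t'_j>0$ and $u=t-t'_j>0$. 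In the boundary case $j=i=0$ the relevant index is $k=1$, and $r_1^Y(t)\ge r_1^X(t)$ is exactly hypothesis (a).

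I expect the main obstacle to be the correct identification of the two dynamic hazard rates, in particular recognizing that the replacement kernel shifts the argument to $t-t'_i$ while the relevation kernel merely truncates and so leaves $r_{j+1}^Y$ evaluated at the current time $t$. Matching the shifted argument against the truncated one is precisely what condition (b) is designed to do, so once the rates are fixed the verification is the short case analysis above; the DFR/$\le_{\textup{dyn-hr}}$ statement follows by reversing every inequality throughout.
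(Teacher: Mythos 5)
Your proof is correct and follows essentially the same route as the paper: identify the two dynamic hazard rates as $\lambda_{i+1}(t\mid h'_t)=r^X_{i+1}(t-t'_i)$ (shifted) and $\eta_{j+1}(t\mid h_t)=r^Y_{j+1}(t)$ (truncated), then read hypotheses (a) and (b) as pointwise hazard-rate inequalities. Your version is in fact somewhat more careful than the paper's two-line argument, spelling out the case analysis in $k$ and the boundary case $j=i=0$, and it avoids a notational slip in the paper's proof where the roles of $\lambda,\eta$ (and of the hazard rates of $X_k,Y_k$) appear interchanged.
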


\begin{proof} The proof follows under similar arguments to the one  of Theorem \ref{Th3.3}. We only need to take into account that, given  two histories $h_t$ and $h'_t$ like in the proof of Theorem \ref{Th3.3}, and dynamic hazard rates $\lambda_.(\cdot|\cdot)$ and $\eta_.(\cdot|\cdot)$ for $(T_1,T_2, \ldots,T_n)$ and $(T'_1,T'_2, \ldots,T'_n)$, respectively, we have  
\[
\lambda_k(t|h'_t)=\left\{ \begin{array}{c l}
r_k(t - t'_i) & \text{if }k=i+1\\
0 & \text{if }k>i+1
\end{array}\right.
\]
and
\[
\eta_k(t|h_t)=\left\{ \begin{array}{c l}
s_k(t) & \text{if }k=j+1 \\
0 & \text{if }k>j+1
\end{array}\right. ,
\]
where $i<j$ and $s_k$ and $r_k$ denote the hazard rates associated to $X_k$ and $Y_k$, respectively, for $k=1,\ldots,n$.
\end{proof}

\section{Applications}\label{Sec4}

From previous results we can provide comparisons for the particular cases of minimal repair and generalized Yule birth processes. Next,  we describe these two models and some results that can be derived from the previous results.

\subsection{Minimal repair policies}

When we perform the relevation among two equally distributed units we get the case of a minimal repair, where an item upon failure,  is restored to a working condition just prior to the failure. This concept was introduced by  Barlow and Hunter (1960) and further references and results can be found in Ascher and Feingold (1984). Therefore, minimal repair is  a particular case of relevation when the two items are equally distributed. 

Analogously, if we consider a unit with survival function $\overline F$ which is being continuously minimally repaired, the sequence of failure times define the minimal repair process. Therefore, the minimal repair process is a particular case of the relevation counting process when the sequence of independent random variables $\{X_n; n\ge 1\}$ satisfies that they are identically distributed with common survival function $\overline F$. In this case, the sequence of failure or arrival times is denoted by $\{T_n^{MR};n\ge 1\}$.We also observe that, in this case, the corresponding process $N'$ based on $\{X_n; n\ge 1\}$, is a renewal process where the counting process and the arrival times will be denoted by $\{N^{Ren};t\ge 0\}$ and $\{T^{Ren};n\ge 1\}$, respectively.

From theorems \ref{Th3.2} and \ref{Th3.3}, the following particular results are derived to compare the counting process for a unit which is being continuously minimally repaired with the one derived from a renewal policy for the same unit.

\begin{The}
Let us consider a unit with random lifetime $X$. Let us denote by $\{T^{Ren}_n;n\ge 1\}$ and $\{T^{MR}_n;n\ge 1\}$ the replacement times for the unit under a renewal and minimal repair policies, respectively. 
\begin{itemize} 

\item[a)] If $X$ is  NBU \textup{[}NWU\textup{]}, then, 
\[
(T^{Ren}_1,T^{Ren}_2, \ldots,T^{Ren}_n)\ge_{\textup{st}}[\le_{\textup{st}}](T^{MR}_1,T^{MR}_2, \ldots,T^{MR}_n),\text{ for all }n\ge 1,
\]
and 
\[
N^{Ren}(t) \le_{\textup{st}} [\ge_{\textup{st}}] N^{MR}(t)\text{, for all }t\ge 0,
\]
where $N^{Ren}$ and $N^{MR}$ are the counting processes associated to the arrival times $\{T^{Ren}_n;n\ge 1\}$ and $\{T^{MR}_n;n\ge 1\}$, respectively. 

\item[b)] If $X$ has an absolutely continuous IFR [DFR] distribution, then, 
\[
(T^{Ren}_1,T^{Ren}_2, \ldots,T^{Ren}_n)\ge_{\textup{dyn-hr}}[\le_{\textup{dyn-hr}}](T^{MR}_1,T^{MR}_2, \ldots,T^{MR}_n),\text{ for all }n\ge 1.
\]

\end{itemize}
\end{The}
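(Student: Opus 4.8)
The plan is to recognize this statement as a direct specialization of Theorems \ref{Th3.2} and \ref{Th3.3} to the case of identically distributed units, so that essentially no new work is required beyond matching up the two descriptions of the processes. As explained at the start of this subsection, the minimal repair process is exactly the relevation counting process $N$ of Section \ref{Sec3} when the underlying sequence of independent random variables $\{X_n; n\ge 1\}$ consists of copies of $X$ with common survival function $\overline F$ (so that every baseline distribution function equals $F$ and every intensity $\lambda_n$ equals $r=f/\overline F$); correspondingly, the process $N'$ built from this same sequence is the renewal process. First I would make this identification explicit, recording that under it $T_n \overset{\text{d}}{=} T^{MR}_n$, $T'_n\overset{\text{d}}{=} T^{Ren}_n$, $N(t)=N^{MR}(t)$ and $N'(t)=N^{Ren}(t)$.

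For part a), I would observe that since $X_n \overset{\text{d}}{=} X$ for every $n$, each $X_n$ is NBU [NWU] precisely when $X$ is, so the hypothesis of Theorem \ref{Th3.2} is satisfied. Its two conclusions then translate, via the identification above, into the multivariate stochastic ordering of $(T^{Ren}_1,\ldots,T^{Ren}_n)$ and $(T^{MR}_1,\ldots,T^{MR}_n)$ and into the ordering $N^{Ren}(t)\le_{\textup{st}}[\ge_{\textup{st}}]N^{MR}(t)$ for all $t\ge 0$.

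For part b), the argument is identical: because each $X_n$ shares the absolutely continuous IFR [DFR] distribution of $X$, the hypothesis of Theorem \ref{Th3.3} is met, and its conclusion yields the dynamic hazard rate comparison of the two vectors of arrival times.

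Since everything reduces to invoking the general theorems, I do not expect a genuine obstacle here. The only point that requires care is the (routine) verification that the minimal repair and renewal processes really are the specializations of $N$ and $N'$ to identically distributed units, together with the elementary remark that an ageing property of $X$ is inherited by every $X_n$; both are exactly the observations recorded in the opening paragraphs of this subsection, so the proof should amount to little more than citing Theorems \ref{Th3.2} and \ref{Th3.3}.
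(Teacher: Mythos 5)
Your proposal is correct and follows exactly the route the paper takes: the paper states this result as a direct corollary of Theorems \ref{Th3.2} and \ref{Th3.3}, obtained by specializing the relevation process to identically distributed units (minimal repair) and the replacement-by-new process to the renewal process. The identification of the processes and the inheritance of the NBU [NWU] and IFR [DFR] properties by each $X_n$ are precisely the observations the paper relies on, so nothing is missing.
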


Therefore, depending on the kind of ageing of the unit under repair, it is better to perform a minimal repair policy or a renewal one.

The previous result can be combined with previous results in the literature to provide some additional comparisons for the minimal repair process. 

For example, Barlow and Proschan (1975) proved that, given a non negative NBU random variable $T$, then 
\[
N^{Ren}(t) \ge_{\textup{st}} N^{A,K}(t)\text{, for all }t\ge 0,
\]
where $N^{A,K}(t)$ denotes the number of failures in $[0,t]$ under an age replacement policy with replacement interval $K$ (for details see Barlow and Proschan, 1975).

This result combined with the previous theorem provides the following statement: 

Given a non negative NBU random variable $T$, then 
\[
N^{MR}(t) \ge_{\textup{st}} N^{A,K}(t)\text{, for all }t\ge 0,
\]

Next, we illustrate with an example, the previous result.

\begin{Exam} Let us consider a random variable $X$ with survival function given by $\overline F(x) = \exp(-h(x))$, for $x\ge 0$, where 
\begin{equation}\label{surv-nbu}
h(x)= \left\{ 
\begin{array}{ l l }
\sin^2(x) & \text{if } x\in\left[0,\frac{\pi}{2}\right] \\
\frac{\pi}{2}\left(x-\frac{\pi}{2}\right) + 1 & \text{if }x\in\left(\frac{\pi}{2},+\infty\right) .\\
\end{array}\right.
\end{equation}

This survival function was introduced by Stoyanov (2013) as an example of a random variable which is NBU but it is not IFR. According to the previous conclusion, the arrival times of the minimal repair process should be smaller, in the stochastic order, than the one of the age process. In Figure \ref{example-age} we can see this fact for $K=0.5$, $1$ and $2$, respectively, for the arrival times 1 to 4.

\begin{figure}[hbt]\centering
\includegraphics[width=\linewidth]{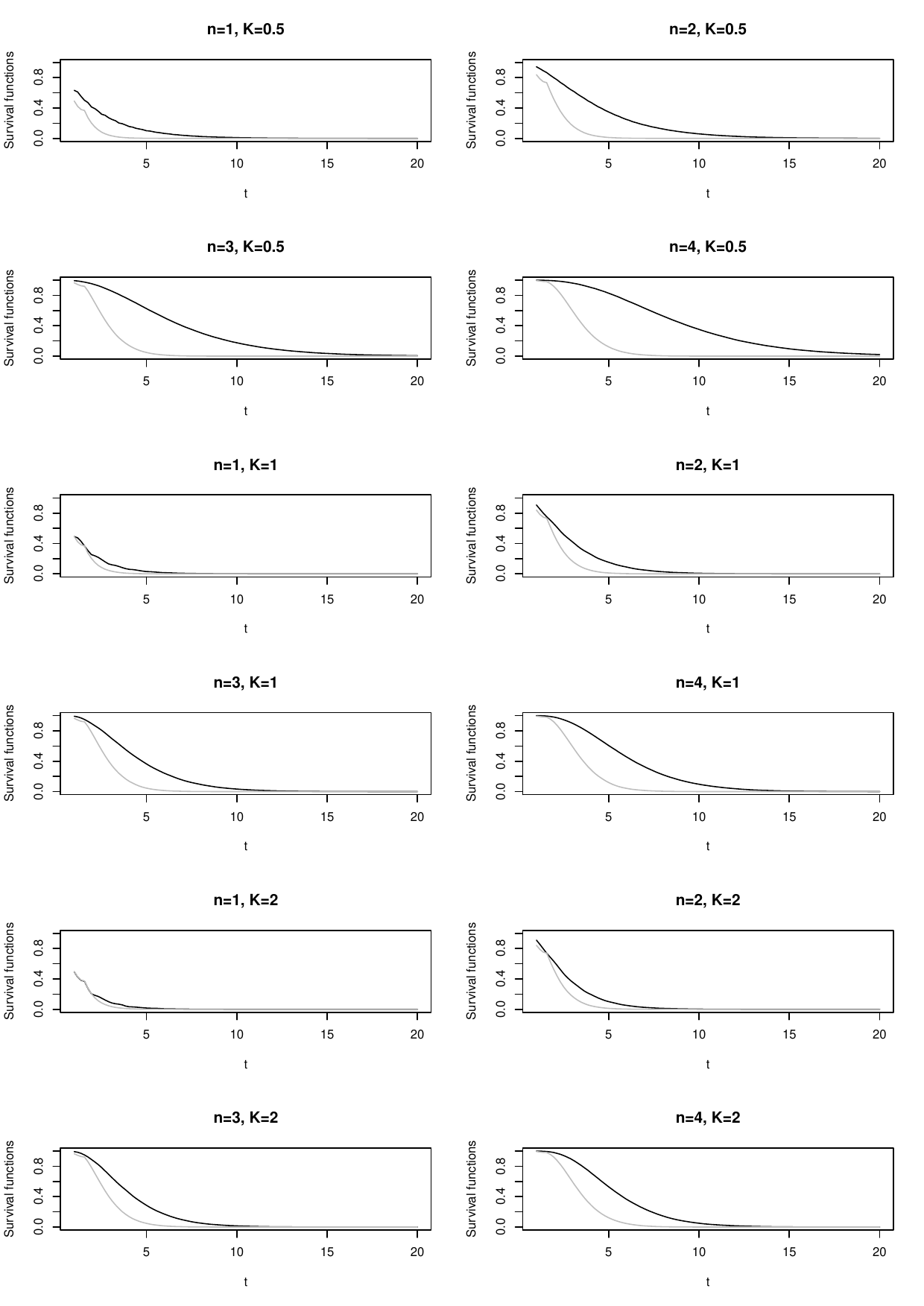}
\caption{\label{example-age}\textit{Survival functions for the arrival times 1 to 4, of a minimal repair process (blue) and an age process for $K=0.5$, 1 and 2 (black), according to \eqref{surv-nbu}.}}
\end{figure}
\end{Exam}

Several additional results can be given in a similar way for some other replacement policies like block  and age-dependent block replacement policies (see Barlow and Proschan, 1975 and Li, 2005).

\subsection{Generalized Yule birth processes}

As we have mentioned in the introduction, another particular case of an EPB process is the generalized Yule birth process. A Yule birth process is a pure birth process with jump intensity of the form $(i+1)r$, from state $i$ to state $i+1$, where $r>0$.  A generalization of the previous stochastic process occurs when  the jump intensity from state $i$ to $i+1$ is given by $(i+1)r(t)$ and, therefore, depends on the calendar time $t$.

This generalized Yule birth process is a particular case of an EPB process for a sequence of random variables $X_i's$ with hazard rates $(i+1)r(t)$, respectively. 

Let us denote by $\{T_n^{Y};n\ge 1\}$, the sequence of arrival times of a generalized Yule birth process based on the sequence of distribution functions associated to the sequence of random variables $\{X_n; n\ge 1\}$, with hazard rates $(n+1)r(t)$, for $n\ge 1$, and by $\{T'_n; n\ge 1\}$ the corresponding arrival times where a unit upon failure is replaced by a new one, based on the sequence of independent random variables $\{X_n; n\ge 1\}$.

Therefore, according to Theorem \ref{Th3.3}, if $r(t)$ is increasing [decreasing] then 
\[
(T'_1,T'_2, \ldots,T'_n)\ge_{\textup{dyn-hr}}[\le_{\textup{dyn-hr}}](T^{Y}_1,T^{Y}_2, \ldots,T^{Y}_n),\text{ for all }n\ge 1.
\]

\section{Conclusions}\label{Sec5}

In this paper, we study the comparison of relevation and replacement by a new one unit policies in terms of the failure times under the corresponding repair policy,  and we show that, depending on the ageing properties of the units under repair, one of the policies is better than the other one. These results are extended to the comparison of the previous counting processes, based on different sequences of random variables. The comparisons are made in terms of the multivariate stochastic and hazard rate orders. 

\section*{Acknowledgements} The authors want to acknowledge the comments by the Associate Editor and two anonymous referees that have greatly improved the presentation and the contents of this paper. The authors also want to acknowledge the support received from the Ministerio de Economía, Industría y Competitividad under grant MTM2016-79943-P (AEI/FEDER, UE).

\newpage

\end{document}